\mathchardef\ogon="012C%
\newcommand{\as}{a\kern-0.22em\lower.40ex\hbox{$_{\ogon}$}}
\newcommand{\As}{A\kern-0.22em\lower.40ex\hbox{$_{\ogon}$}}
\newcommand{\es}{e\kern-0.24em\lower.40ex\hbox{$_{\ogon}$}}
\newcommand{\Es}{E\kern-0.22em\lower.40ex\hbox{$_{\ogon}$}}
\newtheorem{theorem}{Theorem}[section]
\newtheorem{example}[theorem]{Example}
\newtheorem{lemma}[theorem]{Lemma}
\newtheorem{proposition}[theorem]{Proposition}
\newtheorem{remark}[theorem]{Remark}
\newenvironment{proof}[1][Proof]{\noindent\textbf{#1.} }{\ \rule{0.6em}{0.6em}}
\def\qed{\hbox to 0pt{}\hfill$\rlap{$\sqcap$}\sqcup$}
\numberwithin{equation}{section}
\title{On the existence of directional derivatives for strongly cone-paraconvex mappings }
\author{Ewa Bednarczuk
\thanks{Systems Research Institute PAS, ul. Newelska 
	6, 01-447 Warszawa, Poland, Ewa.Bednarczuk@ibspan.waw.pl, 
	Warsaw University of Technology
	 Koszykowa  75,  00-662 Warsaw, Poland, E.Bednarczuk@mini.pw.edu.pl
}, Krzysztof Le\'sniewski
\thanks{Systems Research Institute PAS, ul. Newelska 
	6, 01-447 Warszawa, Poland, Warsaw University of Technology, Faculty of Mathematics and Information Science, ul. Koszykowa 75,
 00-662 Warsaw, Poland,  k.lesniewski@mini.pw.edu.pl
} }
\date{}
\begin{document}
	\maketitle
	\begin{flushright}
	Dedicated to Michel Th\'era on the occasion of his $70^{th}$ birthday.
	\end{flushright}

	\begin{abstract}
We investigate the existence of directional derivatives for strongly cone-paraconvex mappings. Our result is an extension of the classical Valadier result on the existence of  the directional derivative for cone convex mappings with values in weakly sequentially Banach space.
\end{abstract}
{\bf 2010 Mathematics Subject Classification}. {Primary 	49J50; Secondary 	52A41}

{\bf Keywords: }directional derivative, normal cones, strongly paraconvex mappings, cone convex mappings, weakly sequentially complete Banach spaces

		\section{Introduction}
	The concepts of approximate convexity for extended real-valued functions 
	include among others, $\gamma-$paraconvexity \cite{rolewicz1, rolewicz2},
	$\gamma$-semiconcavity \cite{cannarsa},  $\alpha$- paraconvexity,
	strong $\alpha$-paraconvexity \cite{rolewicz3},  semiconcavity \cite{cannarsa}, 
	approximate convexity \cite{thera}. Relations between these concepts were investigated by Rolewicz \cite{rolewicz1, rolewicz2, rolewicz3}, Daniilidis, Georgiev \cite{georgiev},  Tabor, Tabor \cite{tabor}. These concepts were used, e.g. in \cite{cannarsa} to investigate Hamilton-Jacobi equation. In a series of papers  \cite{rolewicz1, rolewicz2, rolewicz3} Rolewicz investigated Gateaux and
	 Fr\'echet differentiability of strongly $\alpha$-paraconvex, generalizing in this way the Mazur theorem (1933).
	 
	 Generalization of the above concepts to vector-valued mappings with values in general vector space $Y$ were given by Vesel\'y, Zajicek \cite{vesely_zajicek1,vesely_zajicek2,vesely_zajicek3,vesely_zajicek4}, Valadier \cite{valadier}, Rolewicz \cite{rolewicz4}.
	 In the paper \cite{rolewicz4} Rolewicz defined vector-valued strongly
	 $\alpha$-$k$ paraconvex mappings and investigated their Gateaux and Fr\'echet
	 differentiability, where $k\in K$ and $K$ is a closed convex cone in a normed vector space $Y$.

		Let $\alpha:\mathbb{R}_{+}\rightarrow\mathbb{R}_{+}$ be a nondecreasing function satisfying the condition
		$$
		\lim_{t\rightarrow 0^{+}}\frac{\alpha(t)}{t}=0.
	$$

			Let $X$ be a normed space and let $k\in K$. The mapping $F:X\rightarrow Y$ is {\em strongly $\alpha$-$k$  paraconvex} 
			on a convex subset $A$ of $X$ if  there exists a constant $C>0$ such that for every
			$x_{1},x_{2}\in A$ and every $\lambda\in[0,1]$
			\begin{equation}
			\label{def_para}
			F(\lambda x_{1}+(1-\lambda)x_{2})\le_{K} \lambda F(x_{1})+(1-\lambda) F(x_{2})+C\min\{\lambda, 1-\lambda\}\alpha(\|x_{1}-x_{2}\|)k,
			\end{equation}
			where $
			x\le_K y \iff y-x \in K.$ In the sequel we use the notation $\le$ if the cone $K$ is clear from the context.
		
		The mapping $F:X\rightarrow Y$ is strongly $\alpha$-$K$ paraconvex
		on a convex subset $A$ of $X$ if for every $k\in K$ there exists a constant $C>0$ such that for every
		$x_{1},x_{2}\in A$ and every $\lambda\in[0,1]$
		\begin{equation}
		\label{def_para_1}
		F(\lambda x_{1}+(1-\lambda)x_{2})\le_{K} \lambda F(x_{1})+(1-\lambda) F(x_{2})+C\min\{\lambda, 1-\lambda\}\alpha(\|x_{1}-x_{2}\|)k.
		\end{equation}
		
		A strongly $\alpha(\cdot)$-$K$ paraconvex mapping $F$ is called strongly cone-paraconvex if cone $K$ and the function $\alpha $ are clear from the context. 
		Since for every $\lambda\in[0,1]$
		$$
		\lambda(1-\lambda)\le\min\{\lambda, 1-\lambda\}\le 2	\lambda(1-\lambda)
		$$
		condition \eqref{def_para} 
		% and \eqref{def_para_1} 
			can be equivalently rewritten
		as
		\begin{equation}
		\label{def_para_2}
		F(\lambda x_{1}+(1-\lambda)x_{2})\le_{K} \lambda F(x_{1})+(1-\lambda) F(x_{2})+2C\lambda(1-\lambda)\alpha(\|x_{1}-x_{2}\|)k.
		\end{equation}
		%where $\tilde{C}$ is generally different from $C$.
		
			Strong cone-paraconvexity generalizes the  cone convexity.
			The mapping $F:X\rightarrow Y$ is $K$-convex
			on a convex subset $A$ of $X$ if  for every
			$x_{1},x_{2}\in A$ and every $\lambda\in[0,1]$ 
			\begin{equation}
			\label{def_conv_2}
			F(\lambda x_{1}+(1-\lambda)x_{2})\le_{K} \lambda F(x_{1})+(1-\lambda) F(x_{2}).
			\end{equation}

			In the present paper we investigate  the existence of directional derivatives for strongly cone-paraconvex mappings. Our main result (Theorem \ref{main}) is a generalization of the theorem of Valadier \cite{valadier}  concerning directional differentiability of cone convex mappings. 
		%\begin{comment}
	%	Our aim is to prove the following theorem.
	%\begin{theorem}
%		Let $X$ be a normed space. Let $Y$ be a weakly sequentially complete Banach space
%		ordered by a closed convex normal cone $K$. 
%		Let $F:X\rightarrow Y$  be strongly $\alpha(\cdot)$-$k_{0}$  paraconvex  on a convex set $A\subset X$ with constant $C\ge 0$,  $k_{0}\in K\setminus\{0\}$. Then the directional derivative
%		$$
%		F'(x_{0};h):=\lim_{t\rightarrow 0^{+}}\frac{F(x_{0}+th)-F(x_{0})}{t}
	%	$$
	%	of $F$  exists at all $x_{0}\in A$ in any direction $h\in X$, 
	%$\|h\|=1$ 
	%such that $x_{0}+th\in A$ for all $t$ sufficiently small.
	%	\end{theorem}
%		
%\end{comment}
	\section{Preliminary facts}

Let $Y^{*}$ be the dual space of $Y$ and $K^{*}\subset Y^{*}$
be the positive dual cone to $K$,
$$
K^{*}:=\{y^{*}\in Y^{*}\ |\ y^{*}(y)\ge 0\ \forall\\ \ y\in K\}.
$$

Clearly, if $F$ is  a strongly $\alpha(\cdot)$-$k$  paraconvex mapping with constant $C>0$,
then  for every $y^{*}\in K^*$ the function $y^{*}\circ F$ is a strongly  $\alpha(\cdot)$-paraconvex function with the  constant $C\cdot y^{*}(k)$. 
%On the other hand, if there exists a
%strongly  $\alpha(\cdot)$-paraconvex function $g:X\rightarrow\mathbb{R}$
%with constant $C$
%such that  for every $y^{*}\in K^*$ the function $y^{*}\circ F +g$ is convex, then $y^{*}\circ F$ is a strongly $\alpha$-paraconvex function with the same constant $C>0$. Indeed, assuming that
%$y^{*}\circ F +g$ is convex we have
%$$
%y^{*}\circ F(\lambda x_{1}+(1-\lambda) x_{2}) +g(\lambda x_{1}+(1-\lambda) x_{2})\le \lambda y^{*}\circ F(x_{1})+\lambda g(x_{1})+(1-\lambda)y^{*}\circ F(x_{2})+(1-\lambda) g(x_{2}).
%$$
%Consequently, for every $y^{*}\in K^{*}$,
%\begin{equation}
%\label{eq_zajicek}
%\begin{array}[t]{l}
%y^{*}\circ F(\lambda x_{1}+(1-\lambda) x_{2})\\
%\le \lambda y^{*}\circ F(x_{1})+\lambda g(x_{1})+(1-\lambda)y^{*}\circ F(x_{2})+(1-\lambda) g(x_{2})- g(\lambda x_{1}+(1-\lambda) x_{2})\\
%\le \lambda y^{*}\circ F(x_{1})+(1-\lambda)y^{*}\circ F(x_{2})+C\alpha(\|x_{1}-x_{2}\|).
%\end{array}
%\end{equation}

In a  normed space $Y$ a cone $K$ is normal (see \cite{valadier}) if there is a number $C>0$ such that 
$$
0\le_K x\le_K y \Rightarrow \|x\|\le C \|y\| \mbox{ for all } x, y \in Y.
$$
Every normal cone is pointed i.e. $K\cap (-K)=\{0\}.$

In \cite{vesely_zajicek1}, Vesel\`y  and Zaji$\check{c}$ek introduced the  concept of d.c. (delta-convex) mappings acting between Banach spaces $X$ and $Y$. A mapping $F:X\rightarrow Y$ is d.c. if there exists
a continuous convex function $g:X\rightarrow \mathbb{R}$  such that
for every $y^{*}\in Y^*$ the function $y^{*}\circ F+g$ is a d.c. function, i.e., it is representable as a difference of two convex functions.

	%The sum of two $k$-$\alpha$ strongly paraconvex mapping is also a 
	%$k$-$\alpha$ strongly paraconvex mapping.
	%	
	
		%\color{red}

	According to \cite{vesely_zajicek3},
		 $F$ is order d.c. if $F$ is representable as a difference of two  cone convex mappings on $A$.  Consequently, if the cone $K$ is normal, then $F$ is also weakly order d.c.
		 
		 Moreover, if the range space $Y$ of an order d.c. mapping $F$ is  ordered  by a  well-based cone $K$ (and this is true for $L_1(\mu))$, it is easy to show (see Proposition 4.1 \cite{vesely_zajicek3}) that the mapping is then d.c.

	In the example below we show that any strongly $\|\cdot\|^2$-$k_{0}$-paraconvex mapping is order d.c.
	\begin{example}	
	Let $X$ be a Hilbert space.
	A mapping  $F:X \rightarrow Y$ is  strongly $\|\cdot\|^2$-$k_{0}$-paraconvex with  constant $C\ge0$ on a convex set $A$ if and only if the mapping $F+  C  \|\cdot\|^2k_0$ is $K$-convex on  $A$.
	Indeed, let $x_1, x_2 \in X$.
	Since
	\begin{equation}
	\label{zay1}
	\lambda \|x_1\|^2 + (1-\lambda) \|x_2\|^2 - \|\lambda x_1 + (1-\lambda x_2)\|^2= \lambda(1-\lambda) \|x_1-x_2\|^2
	\end{equation}
	and
	\begin{align*}
	F(\lambda x_{1}+(1-\lambda)x_{2})\le_{K} \lambda F(x_{1})+(1-\lambda) F(x_{2})+{C}\lambda(1-\lambda)\|x_{1}-x_{2}\|^2k_0\\
\end{align*}
	 %which from \eqref{zay1}  is equivalent to
	 we have
	 \begin{align*}
	F(\lambda x_{1}+(1-\lambda)x_{2}) +C\|\lambda x_1 + (1-\lambda) x_2)\|^2k_0 & \le_{K}\\ \lambda F(x_{1})+(1-\lambda) F(x_{2})+C\lambda \|x_1\|^2k_0 + (1-\lambda) \|x_2\|^2k_0 .
	\end{align*}
	The mapping $F(\cdot)= F(\cdot) + C\|\cdot\|^2k_0$ is clearly order d.c. Furthermore, if $K$ is well based ($\exists y^*\in Y^*$ such that $y^*(k)\ge \|k\|$ for any $k\in K$), then $F$ is d.c.
		\end{example}

	%In our paper we prove the existence of directional derivative for strongly $\alpha(\cdot)$-$K$  paraconvex mappings.
	For d.c. mappings we have the following result on the existence of directional derivative.
	\begin{theorem}[Proposition 3.1 of \cite{vesely_zajicek1}]
	Let $X$ be a normed linear space and let $Y$ be a Banach space. Let $G\subset X$ be an open convex set an let $F: G \rightarrow Y$ be a d.c. mapping. Then the directional derivative $F'(x_0,h)$ exists whenever $x_0\in G$ and $h\in X.$
	\end{theorem}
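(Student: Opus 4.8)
The plan is to localize to a segment through $x_{0}$ in the direction $h$, read off from the d.c.\ structure that every real scalarization of $F$ behaves like a convex function there, and then upgrade the resulting pointwise convergence of difference quotients to norm convergence by a \emph{uniform} Cauchy estimate, completeness of $Y$ doing the rest. Concretely, recall that $F$ being d.c.\ means (in the controlled form) that there is a continuous convex $g\colon X\to\mathbb{R}$ such that $y^{*}\circ F+g$ is convex for every $y^{*}$ in the closed unit ball $B_{Y^{*}}$ of $Y^{*}$. Fix $x_{0}\in G$ and $h\in X$; openness of $G$ gives $\delta>0$ with $x_{0}+th\in G$ for $|t|<\delta$. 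For $t\in(0,\delta)$ set
\[
D_{t}:=\tfrac{1}{t}\bigl(F(x_{0}+th)-F(x_{0})\bigr)\in Y,\qquad q(t):=\tfrac{1}{t}\bigl(g(x_{0}+th)-g(x_{0})\bigr)\in\mathbb{R};
\]
the assertion $F'(x_{0},h)$ exists is exactly the existence of $\lim_{t\to0^{+}}D_{t}$ in $Y$.

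First I would dispose of the two scalar ingredients. Since $t\mapsto g(x_{0}+th)$ is convex on $(-\delta,\delta)$, its slope $q(t)$ is nondecreasing in $t\in(0,\delta)$ and bounded below (convexity again, comparing with a slope at a negative parameter, using that $g$ is finite everywhere); hence $q(t)$ has a finite limit as $t\to0^{+}$. Likewise, for each $y^{*}\in B_{Y^{*}}$ the function $\varphi_{y^{*}}(t):=y^{*}(F(x_{0}+th))+g(x_{0}+th)$ is convex on $(-\delta,\delta)$, so its slope from $0$ is nondecreasing, i.e.\ for $0<s<t<\delta$
\[
\frac{\varphi_{y^{*}}(s)-\varphi_{y^{*}}(0)}{s}\ \le\ \frac{\varphi_{y^{*}}(t)-\varphi_{y^{*}}(0)}{t},
\]
which, after removing the $g$-part, reads $y^{*}(D_{s})-y^{*}(D_{t})\le q(t)-q(s)$.

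The decisive step is to make this estimate uniform in $y^{*}$. Applying the last inequality to both $y^{*}$ and $-y^{*}$ (both in $B_{Y^{*}}$) gives $|y^{*}(D_{s}-D_{t})|\le q(t)-q(s)$ for all $y^{*}\in B_{Y^{*}}$ and $0<s<t<\delta$; taking the supremum over such $y^{*}$ and using $\|y\|=\sup_{\|y^{*}\|\le1}|y^{*}(y)|$ yields $\|D_{s}-D_{t}\|\le|q(t)-q(s)|$ for all $s,t\in(0,\delta)$. Since $q(t)$ converges as $t\to0^{+}$, the right-hand side tends to $0$ as $s,t\to0^{+}$, so $\{D_{t}\}_{t\in(0,\delta)}$ is Cauchy in norm; completeness of $Y$ then delivers a limit, which is $F'(x_{0},h)\in Y$.

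The point where one must be careful is precisely the uniformity of $\|D_{s}-D_{t}\|\le|q(t)-q(s)|$: it works only because a single control function $g$ serves all $y^{*}$ simultaneously, so that the controlling slope $q$ does not depend on $y^{*}$, and because the norm of $Y$ is recovered from the dual unit ball. Completeness of $Y$ is equally essential: without it the difference quotients would only be seen to converge in the completion of $Y$, while arguing with the scalarizations $y^{*}(D_{t})$ alone would give no more than a weak$^{*}$ limit in $Y^{**}$. Closing that gap is exactly what the convexity-with-control hypothesis together with completeness of $Y$ accomplishes.
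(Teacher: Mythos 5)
Your argument is correct and is, in substance, the standard proof of Proposition 3.1 in \cite{vesely_zajicek1}: the single control function $g$ gives the slope inequality $y^{*}(D_{s})-y^{*}(D_{t})\le q(t)-q(s)$ uniformly over the dual unit ball, hence $\|D_{s}-D_{t}\|\le q(t)-q(s)$, and monotone convergence of $q$ plus completeness of $Y$ finish the job; the present paper states this theorem only as a citation and supplies no proof of its own, so there is nothing internal to compare against. One point worth flagging: you correctly work with the controlled definition ($y^{*}\circ F+g$ convex for every $y^{*}$ in the closed unit ball of $Y^{*}$), which is the definition actually used in \cite{vesely_zajicek1}; the weaker paraphrase given in this paper, namely that each $y^{*}\circ F+g$ is merely representable as a difference of convex functions, would not support your uniform Cauchy estimate, since the controlling convex function could then depend on $y^{*}$.
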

	Let us observe that if function $\alpha(\cdot)$ is not convex, then we cannot expect a strongly $\alpha(\cdot)$-$k_0$ paraconvex mapping $F$ to be d.c. 	
	
	\section{Monotonicity of difference quotients}
	
	Let $X$ be a normed space. Let $Y$ be a topological vector spaces and let $K\subset Y$ be a closed convex pointed cone.
	%A point  $\bar{x}$ is in relative interior of cone $K$ i.e. $\bar{x} \in Int_rK$ if there is some $\gamma>0$ such that 
	%$\bar{x} + \gamma \mathbb{B} \cap \mbox{aff}K\subset K.$

	%The mapping $F:X\rightarrow Y$ is $K$-paraconvex
	%on a convex subset $A$ of $X$ if for every $k\in K$ there exists a constant $C>0$ such that for every
	%$x_{1},x_{2}\in A$ and every $\lambda\in[0,1]$
	%\begin{equation}
	%\label{def_para_0}
	%F(\lambda x_{1}+(1-\lambda)x_{2})\le_{K} \lambda F(x_{1})+(1-\lambda) F(x_{2})+C\min\{\lambda, 1-\lambda\}\alpha(\|x_{1}-x_{2}\|)k
	%\end{equation}

	For $K$-convex mappings, the difference quotient is nondecreasing in the sense that
	$$
	\phi(t_{1})-\phi(t_{2}):=\frac{F(x_0+t_1h)-F(x_0)}{t_1}-\frac{F(x_0+t_2h)-F(x_0)}{t_2}\in K \ \mbox{for } t_1 \ge t_2.
	$$
	For strongly $\alpha(\cdot)$-$K$  paraconvex and  strongly $\alpha(\cdot)$-$k_{0}$  paraconvex mappings, the  difference quotient may not be nondecreasing.
	\begin{example}
	Let $Y=\mathbb{R}$, $K=\mathbb{R}_+$, $\alpha(x)=x^2$ and let $F(x)=-x^2.$
	The mapping $F$ is strongly $\alpha(\cdot)$-$K$-paraconvex. Observe that for any $x_{1}, x_{2}\in \mathbb{R}$
	we $t(x^2_1+x_2^2)-2t(x_1x_2)\le 0$ if and only if $t\le0.$ Hence,
	for $t=-\lambda^2+\lambda-1\le0$ we have
	\begin{align*}
	(-\lambda^2+\lambda-1)(x_1^2+x_2^2)-2x_1x_2(-\lambda^2+\lambda-1)\le 0\\
	x_1^2(-\lambda^2+\lambda-1)+x_1x_2(-2\lambda(1-\lambda)+2)+x_2^2(-(1-\lambda)^2+1-\lambda-1)\le 0\\
	-(\lambda x_1 +(1-\lambda)x_2)^2\le -\lambda x_1^2-(1-\lambda)x_2^2+(x_1-x_2)^2\\
	F(\lambda x_1 + (1-\lambda)x_2)\le \lambda F(x_1) + (1-\lambda) F(x_2) + (x_1-x_2)^2.
	\end{align*}
	Last inequality and Proposition 2.1 from \cite{jourani} give us paraconvexity of mapping $F$.
	
	Let $x_0=0, h=1.$ Difference quotient $\phi(t)=\frac{F(x_0+th)-F(x_0)}{t}$ is decreasing. Indeed, for $t_1\le t_2$ we have
	$\phi(t_1)=-t_1$ and $\phi(t_2)=-t_2.$
	\end{example}

The following two propositions  are basic  tools for the proof of
the main result in the next section.
In the proposition below we investigate the monotonicity properties of the $\alpha(\cdot)$-difference quotients for  strongly $\alpha(\cdot)$-$k$ paraconvex mappings.
\begin{proposition}
\label{mono}
	Let $X$ be a normed space and let $Y$ be a vector spaces and ordered by a convex pointed cone $K$.
	Let $F:X\rightarrow Y$ be   strongly $\alpha(\cdot)$-$k_{0}$  paraconvex  on a convex set $A\subset X$ with constant $C\ge 0$,  $k_{0}\in K\setminus\{0\}$. For  any  $x_{0}\in A$ and any $ h\in X$, 
	$\|h\|=1$ 
	such that $x_{0}+th\in A$ for all $t$ sufficiently small,
	 the  $\alpha(\cdot)$-difference quotient mapping $\phi:\mathbb{R}\rightarrow Y$ defined as
	\begin{equation}
	\label{phi0}
	\phi(t):=\frac{F(x_{0}+th)-F(x_{0}+t_{0}h)}{t-t_{0}}+
	C\frac{\alpha(t-t_{0})}{t-t_{0}}k_{0} \ \text{for}\ t_{0}<t, 
	\end{equation}
	where $t_{0}\in\mathbb{R}$	
	is  $\alpha(\cdot)$-nondecreasing in the sense that 
	\begin{equation}
	\label{nier}
	\phi(t)-\phi(t_{1})+C\frac{\alpha(t_1-t_{0})}{t_{1}-t_{0}}k_0\in K\ \ \text{for}\ \ t_{0}<t_{1}<t.
	\end{equation}
	
	\end{proposition}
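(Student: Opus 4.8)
The plan is to mimic the classical three-point argument used to show that difference quotients of convex functions are monotone, but carrying along the correction term $C\alpha(t-t_0)/(t-t_0)\,k_0$ throughout. First I would fix $t_0<t_1<t$ and write $x_1:=x_0+t_1h$, $x:=x_0+th$, and express the intermediate point $x_1$ as a convex combination of $x_0+t_0h$ and $x$: explicitly $x_1=\lambda(x_0+t_0h)+(1-\lambda)x$ with $\lambda=(t-t_1)/(t-t_0)\in(0,1)$ and $1-\lambda=(t_1-t_0)/(t-t_0)$. Since $\|h\|=1$ we have $\|(x_0+t_0h)-x\|=|t-t_0|=t-t_0$, so the paraconvexity inequality \eqref{def_para_2} applied to these two points gives
\begin{equation*}
F(x_1)\le_K \lambda F(x_0+t_0h)+(1-\lambda)F(x)+2C\lambda(1-\lambda)\alpha(t-t_0)k_0.
\end{equation*}
(One could equally use the $\min\{\lambda,1-\lambda\}$ form \eqref{def_para}; the bookkeeping with the constant is the only thing that changes.)

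Next I would substitute the values of $\lambda$, $1-\lambda$ and rearrange. Subtracting $F(x_0+t_0h)$ from both sides and dividing by the positive number $t_1-t_0$ should turn the left-hand side into $\dfrac{F(x_1)-F(x_0+t_0h)}{t_1-t_0}$, i.e. the first term of $\phi(t_1)$, and the right-hand side into a combination of $\dfrac{F(x)-F(x_0+t_0h)}{t-t_0}$ (the first term of $\phi(t)$) plus multiples of $\alpha(t-t_0)k_0$. Keeping track of the cone direction ($K$ is closed under multiplication by nonnegative scalars and under addition), the inequality rearranges to a statement of the form $\phi(t)-\phi(t_1)+(\text{some nonnegative multiple of }\alpha(t-t_0))\,k_0\in K$, up to the $\alpha$-correction terms hidden inside the definition of $\phi$. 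The computation will need the elementary identity $\dfrac{1}{t_1-t_0}-\dfrac{1}{t-t_0}=\dfrac{t-t_1}{(t_1-t_0)(t-t_0)}=\dfrac{\lambda}{t_1-t_0}$, which is exactly the coefficient that appears, so the algebra should close up and leave precisely the term $C\dfrac{\alpha(t_1-t_0)}{t_1-t_0}k_0$ on the left of \eqref{nier}, together with a leftover nonnegative multiple of $\alpha(t-t_0)k_0$ that can be absorbed into $K$ since $k_0\in K$ and $\alpha\ge 0$.

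The main obstacle I anticipate is purely one of bookkeeping: matching the $\alpha(t-t_0)$-term produced by \eqref{def_para_2} against the two built-in correction terms $C\alpha(t-t_0)/(t-t_0)k_0$ and $C\alpha(t_1-t_0)/(t_1-t_0)k_0$ in the definition of $\phi$, while using only that $\alpha$ is nonnegative and nondecreasing (in particular $\alpha(t_1-t_0)\le\alpha(t-t_0)$) and that the coefficients are nonnegative so that the residual terms stay in $K$. No limiting argument or continuity is needed — unlike the sublinearity condition $\lim_{t\to 0^+}\alpha(t)/t=0$, which will only be invoked later for the main theorem — so the proof is a finite manipulation of the single inequality \eqref{def_para_2} together with closedness of $K$ under $\mathbb{R}_+$-scaling and addition. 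I would finish by noting that the case $t_0<t_1<t$ with all differences positive is the only one relevant to \eqref{nier}, so no separate sign cases arise.
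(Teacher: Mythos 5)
Your plan is the same three\mbox{-}point argument as the paper's proof: write $x_0+t_1h$ as a convex combination of $x_0+t_0h$ and $x_0+th$, apply paraconvexity, subtract $F(x_0+t_0h)$, divide by $t_1-t_0$, and absorb the leftover multiple of $\alpha(t-t_0)k_0$ into the correction term built into $\phi(t)$. Two small remarks. First, the term $C\frac{\alpha(t_1-t_0)}{t_1-t_0}k_0$ added in \eqref{nier} exactly cancels the correction inside $\phi(t_1)$, so monotonicity of $\alpha$ (your ``$\alpha(t_1-t_0)\le\alpha(t-t_0)$'') is never actually used; all that is needed is that the paraconvexity correction, after division by $t_1-t_0$, is dominated by the $C\frac{\alpha(t-t_0)}{t-t_0}k_0$ sitting inside $\phi(t)$.

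Second, and this is the one genuine slip: your parenthetical claim that the form \eqref{def_para_2} works ``equally'' is not right for the statement as written. With $\lambda=\frac{t_1-t_0}{t-t_0}$ the weight on $F(x_0+th)$, dividing the correction $2C\lambda(1-\lambda)\alpha(t-t_0)$ by $t_1-t_0$ gives $2C(1-\lambda)\frac{\alpha(t-t_0)}{t-t_0}$, which tends to $2C\frac{\alpha(t-t_0)}{t-t_0}$ as $t_1\downarrow t_0$ and so cannot be absorbed into the single available term $C\frac{\alpha(t-t_0)}{t-t_0}k_0$; you would only obtain \eqref{nier} with an extra slack of $C\frac{\alpha(t-t_0)}{t-t_0}k_0$ (equivalently, with $2C$ in place of $C$). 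To get \eqref{nier} with the stated constant you must use the $\min$ form \eqref{def_para} and the bound $\frac{\min\{\lambda,1-\lambda\}}{t_1-t_0}\le\frac{\lambda}{t_1-t_0}=\frac{1}{t-t_0}$ --- which is exactly what the paper's case distinction (i)/(ii) verifies. Since the downstream results only use \eqref{nier} qualitatively, the factor of $2$ is harmless there, but it does matter for proving the proposition as stated.
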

\begin{proof}
	Take any $t_{0}<t_{1}<t$.
	 We have $0<\lambda:=\frac{t_{1}-t_{0}}{t-t_{0}}<1$ and
	$$
	x_{0}+t_{1}h
	=\lambda(x_{0}+th)+(1-\lambda)(x_{0}+t_{0}h).
	$$
	Let  $k_{0}\in K\setminus \{0\}$. Since $F$ is  strongly $\alpha(\cdot)$-$k_{0}$  paraconvex with  constant $C\ge0$  we have
	\begin{equation}
	F(x_{0}+t_{1}h)
		\begin{array}[t]{l}\le_{K}\lambda F(x_{0}+th)+(1-\lambda)F(x_{0}+t_{0}h)\\
	+C\min\{\lambda, 1-\lambda\}\alpha(t-t_{0})k_{0}.
	\end{array}
	\end{equation}
	Hence,
	$$
	0\le_{K}\lambda[F(x_{0}+th)-F(x_{0}+t_{0}h)]-[F(x_{0}+t_{1}h)-F(x_{0}+t_{0}h)]+C\min\{\lambda, 1-\lambda\}\alpha(t-t_{0})k_{0}
	$$
	i.e.
	\begin{equation}
	\label{formula_1}
	[\frac{F(x_{0}+th)-F(x_{0}+t_{0}h)}{t-t_{0}}]-
		[\frac{F(x_{0}+t_{1}h)-F(x_{0}+t_{0}h)}{t_{1}-t_{0}}]+
		C\min\{\lambda, 1-\lambda\}\frac{\alpha(t-t_{0})}{t_{1}-t_{0}}k_{0}\in K.
\end{equation}
We have 

	\begin{description}
		\item {(i).}  If $\lambda\le 1-\lambda$, i.e.
		$t_{1}-t_{0}\le t-t_{0}$, then
		\begin{equation}
		\label{case_one}
			\min\{\lambda, 1-\lambda\}	\frac{\alpha(t-t_{0})}{t_{1}-t_{0}}=
			\frac{\alpha(t-t_{0})}{t-t_{0}}
		\end{equation}
		\item {(ii).} If $\lambda> 1-\lambda$, i.e.	$\frac{t_1-t_0}{t-t_0}> \frac{t-t_1}{t-t_0}$, then
		\begin{equation}
		\label{case_two} 
\min\{\lambda, 1-\lambda\}	\frac{\alpha(t-t_{0})}{t_{1}-t_{0}}=\frac{t-t_1}{t-t_0}\frac{\alpha(t-t_{0})}{t_{1}-t_{0}}<\frac{\alpha(t-t_{0})}{t-t_{0}}
		\end{equation}
		\end{description}
In both cases
			\begin{align*}
			[\frac{F(x_{0}+th)-F(x_{0}+t_{0}h)}{t-t_{0}}]-
		[\frac{F(x_{0}+t_{1}h)-F(x_{0}+t_{0}h)}{t_{1}-t_{0}}]+\\ C\frac{\alpha(t-t_{0})}{t-t_{0}}k_0-C\frac{\alpha(t_1-t_{0})}{t_{1}-t_{0}}k_0+C\frac{\alpha(t_1-t_{0})}{t_{1}-t_{0}}k_0\in K.
			\end{align*}
		\end{proof}

 If  $\mbox{int\,}K\neq \emptyset$, then any   strongly $\alpha(\cdot)$-$k_{0}$ paraconvex  mapping $F$   is strongly $\alpha(\cdot)$-$K$ paraconvex and for any $k\in K$ the $\alpha(\cdot)$-difference
quotients satisfy the formula \eqref{nier} with different constants $C$, and in general, one cannot find a single constant $C$ for all $0\neq k\in K$.

In the proposition below we investigate the boundedness of $\alpha(\cdot)$-difference quotient for strongly $\alpha(\cdot)$-$k$ paraconvex mappings.

\begin{proposition}
\label{wniosek1}
Let $X$ be a normed  space. Let $Y$ be a topological vector space and let $Y$
	be ordered by a closed convex pointed cone $K$.
	Let $F:X\rightarrow Y$ be   strongly $\alpha(\cdot)$-$k_{0}$  paraconvex  on a convex set $A\subset X$ with constant $C\ge 0$,  $k_{0}\in K\setminus\{0\}$.
	
	For  any  $x_{0}\in A$ and any $ h\in X$, 
	$\|h\|=1$ 
	such that $x_{0}+th\in A$ for all $t$ sufficiently small,
	 the $\alpha(\cdot)$-difference quotient mapping $\phi:[0,+\infty)\rightarrow Y$,
	\begin{equation}
	\label{phi1}
	\phi(t):=\frac{F(x_{0}+th)-F(x_{0})}{t}+
	C\frac{\alpha(t)}{t}k_0 \ %\text{for}\ t_{0}<t
	\end{equation}	
	is bounded from below in the sense that there is an element $a\in Y$ and $\delta >0$ such that
	\begin{equation}
	\label{ograniczenie1}
	\phi(t)-a\in K\ \ \text{for}\ \ 0<t<\delta.
	\end{equation}

\end{proposition}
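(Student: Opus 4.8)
The plan is to derive the lower bound directly from the defining inequality \eqref{def_para}, using the classical device for difference quotients of convex functions: express $x_0$ as a convex combination of the moving point $x_0+th$ and a fixed ``anchor'' point on the opposite side of $x_0$. First I would fix $\delta>0$ small enough that $x_0-\delta h\in A$ and $x_0+th\in A$ for all $0<t<\delta$ (available from the hypothesis once ``$t$ sufficiently small'' is read as including a left neighbourhood of $0$; this two-sidedness is exactly what makes the anchor point available, and is the only point where the domain assumption is genuinely used). For $0<t<\delta$ set $\lambda:=\delta/(t+\delta)\in(\tfrac12,1)$; then $x_0=\lambda(x_0+th)+(1-\lambda)(x_0-\delta h)$, so that $\min\{\lambda,1-\lambda\}=1-\lambda=t/(t+\delta)$, and $\|(x_0+th)-(x_0-\delta h)\|=t+\delta$ since $\|h\|=1$.

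Next I would apply \eqref{def_para} with $x_1=x_0+th$, $x_2=x_0-\delta h$ and this $\lambda$, then rearrange and multiply the resulting member of $K$ by the positive scalar $(t+\delta)/(\delta t)=1/t+1/\delta$, using only that $K$ is a convex cone (closed under addition and under multiplication by positive scalars). This produces
\[
\frac{F(x_0+th)-F(x_0)}{t}+\frac{F(x_0-\delta h)-F(x_0)}{\delta}+\frac{C}{\delta}\,\alpha(t+\delta)\,k_0\in K .
\]
Since $\dfrac{F(x_0+th)-F(x_0)}{t}=\phi(t)-C\dfrac{\alpha(t)}{t}k_0$ and $C\dfrac{\alpha(t)}{t}k_0\in K$ (because $\alpha\ge 0$ on $\mathbb{R}_+$ and $k_0\in K$), adding this last element gives $\phi(t)+\dfrac{F(x_0-\delta h)-F(x_0)}{\delta}+\dfrac{C}{\delta}\alpha(t+\delta)k_0\in K$.

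Finally I would remove the remaining $t$-dependence: because $\alpha$ is nondecreasing and $t+\delta\le 2\delta$ for $0<t<\delta$, we have $\alpha(2\delta)-\alpha(t+\delta)\ge 0$, whence $\dfrac{C}{\delta}\big(\alpha(2\delta)-\alpha(t+\delta)\big)k_0\in K$; adding this once more yields $\phi(t)+\dfrac{F(x_0-\delta h)-F(x_0)}{\delta}+\dfrac{C}{\delta}\alpha(2\delta)k_0\in K$, that is, \eqref{ograniczenie1} holds with the fixed element $a:=\dfrac{F(x_0)-F(x_0-\delta h)}{\delta}-\dfrac{C}{\delta}\alpha(2\delta)k_0\in Y$.

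There is no deep obstacle here; the verification is essentially bookkeeping. The two things to watch are: (i) the choice of the left-hand anchor point $x_0-\delta h$, which forces the one genuine use of the domain hypothesis; and (ii) making sure that \emph{every} $t$-dependent term — both $C\alpha(t)k_0/t$ and $C\alpha(t+\delta)k_0/\delta$ — can be absorbed into $K$, so that the bound $a$ is truly independent of $t$. Neither closedness nor pointedness of $K$ is needed. One could alternatively deduce the statement from Proposition \ref{mono} applied with $t_0=-\delta$ and $t_1=0$, but that requires an extra comparison between the two difference-quotient mappings $\phi$ and is longer than the direct computation above.
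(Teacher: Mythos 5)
Your proof is correct, and it rests on the same underlying idea as the paper's argument --- bounding the forward difference quotient from below by anchoring at a point on the opposite side of $x_0$ along $-h$ --- but the execution differs. The paper proceeds in two stages: it first applies the monotonicity inclusion \eqref{nier} with $t_0=-t$, $t_1=0$ to compare $\phi(t)$ with the backward quotient $-\frac{F(x_{0}-th)-F(x_{0})}{t}$, and then bounds that backward quotient by writing $x_0-th$ as a convex combination of $x_0-h$ and $x_0$, arriving at the bound $b=F(x_0)-F(x_0-h)-(C\alpha(1)+1)k_0$. Your single application of \eqref{def_para} with $x_0=\lambda(x_0+th)+(1-\lambda)(x_0-\delta h)$, $\lambda=\delta/(t+\delta)$, collapses these two stages into one and produces the bound $a$ directly; the algebra (the scaling by $(t+\delta)/(\delta t)$, the absorption of $C\alpha(t)k_0/t$ and of $\frac{C}{\delta}(\alpha(2\delta)-\alpha(t+\delta))k_0$ into $K$) all checks out. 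Your variant has a small but genuine advantage: the paper's second stage uses the point $x_0-h$, which lies at unit distance from $x_0$ and is strictly speaking not covered by the hypothesis that $x_0+th\in A$ only for $t$ sufficiently small, whereas your anchor $x_0-\delta h$ stays inside the prescribed neighbourhood. Both arguments use only that $K$ is a convex cone, and both depend on reading the hypothesis as supplying a two-sided neighbourhood of $0$ in $t$, which is indeed how the paper uses it.
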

	
\begin{proof}
%Let us take $\delta>0$ and $z>0$ as in the Fact \ref{fact1}. 
Let us take $t_0=-t$, $t_1=0$. From inclusion \eqref{nier}  we have
\begin{equation}
\label{eq_bound1}
\frac{F(x_{0}+th)-F(x_{0}-th)}{2t}+
	C\frac{\alpha(2t)}{2t}k_0- \frac{F(x_{0})-F(x_{0}-th)}{t}-
	C\frac{\alpha(t)}{t}k_0+C\frac{\alpha(t)}{t}k_0 \in K.\\
	\end{equation}
	Multiplying both sides by $2t>0$ we get
	$$
	F(x_{0}+th)-F(x_{0}-th)+C{\alpha(2t)}k_0-2F(x_0)+2F(x_0-th)\in K.
	$$
	By simple calculations we get
	$$\frac{F(x_{0}+th)-F(x_{0})}{t}+\frac{F(x_{0}-th)-F(x_{0})}{t}+2C\frac{\alpha(2t)}{2t}k_0\in K.
	$$
	Since  $\lim\limits_{t \rightarrow 0^+}\frac{\alpha(t)}{t}=0$, there exists $\delta >0$ such that  $2C\frac{\alpha(2t)}{2t} \le 1$ for $t\in (0, \delta)$. We  have 
	\begin{equation}
	\label{szacowanie1}
	\frac{F(x_{0}+th)-F(x_{0})}{t}+k_0\ge_K -\frac{F(x_{0}-th)-F(x_{0})}{t}.
	\end{equation}
	%Which shows that right side is bounded above and the left side is bounded below.
	
	%\end{proof}

Now, let us take $-1<-t<0$. We have
$$
x_0-th=t\underbrace{(x_0-h)}_{x_1}+(1-t)\underbrace{x_0}_{x_2}.
$$
From $\alpha()$-$k_0$ paraconvexity \eqref{def_para} for $\lambda:=t$ we get
$$
F(x_0-th)\le_K tF(x_0-h)+(1-t)F(x_0)+C\min\{t, 1-t\}\alpha(1)k_0
$$
By simple calculation we get

$$
-\frac{F(x_{0}-th)-F(x_{0})}{t}- F(x_0)+F(x_0-h)+C\frac{\min\{t, 1-t\}}{t}\alpha(1)k_0\in K.
$$
Since $\frac{\min\{t, 1-t\}}{t}=\frac{1-|2t-1|}{2t}$ and the fact that $\frac{1-|2t-1|}{2t}\le 1$ is bounded we get 
\begin{align*}
-\frac{F(x_{0}-th)-F(x_{0})}{t}- F(x_0)+F(x_0-h)+ C\alpha(1)k_0\in K% \\ +\underbrace{C\frac{\min\{t, 1-t\}}{t}\alpha(\|h\|)k_0-C\alpha(\|h\|)k_0}_{\le_K 0}\in K&.
\end{align*}
Hence, 
$$
-\frac{F(x_{0}-th)-F(x_{0})}{t}- F(x_0)+F(x_0-h)+C\alpha(1)k_0\in K.
$$
From \eqref{szacowanie1} we get
$$
\frac{F(x_{0}+th)-F(x_{0})}{t}-b\ge_K 0,
$$
where $b:=F(x_0)-F(x_0-h)-(C\alpha(1)+1)k_0.$
Finally,  
$$
\phi(t) - b \ge_K 0 \mbox{ for } 0<t<\delta.
$$
\end{proof}

\section{Main result}

The proof of the main theorem is based on the following lemma.

\begin{lemma}
	\label{pomocniczy}
	Let $Y$ be a Banach space. Let $K\subset Y$ be a closed convex normal cone. Let  $\Phi: \mathbb{R}_+ \rightarrow Y$ satisfy the following conditions
	\begin{itemize}
		\item[(i)] $\Phi(t) \in K$  for any  $t \in \mathbb{R}_+$,
		\item[(ii)] for  $0<t_1< t$ we have $\Phi(t)-\Phi(t_1) +  \frac{\alpha(t_1)}{t_1}k_0\in K$ for some $k_{0}\in K,$
		\item[(iii)] $\Phi(t)$ is weakly convergent to 0 when $t\rightarrow 0^+$
	\end{itemize}
	then $\|\Phi(t)\|\rightarrow 0$ when $t\rightarrow 0^+.$
\end{lemma}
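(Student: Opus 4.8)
The goal is to upgrade weak convergence of $\Phi(t)$ to $0$ into norm convergence, using the monotonicity condition (ii) together with normality of $K$. The natural strategy is to show that $\Phi(t)$ is eventually order-bounded above by a vector that itself converges to $0$ in norm; then normality of $K$ finishes the job. First I would fix a sequence $t_n \downarrow 0^+$ and, for each $n$, apply (ii) with $t = t_m$ and $t_1 = t_n$ for $m > n$ (so $t_m < t_n$), which gives $\Phi(t_n) - \Phi(t_m) + \frac{\alpha(t_n)}{t_n}k_0 \in K$, i.e. $\Phi(t_m) \le_K \Phi(t_n) + \frac{\alpha(t_n)}{t_n}k_0$ for all $m > n$. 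Thus for each fixed $n$ the tail $\{\Phi(t_m)\}_{m>n}$ lies in the order interval between (something in $K$, by (i)) and $\Phi(t_n) + \frac{\alpha(t_n)}{t_n}k_0$.

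The key step is then the following: since $\Phi(t_m) \le_K \Phi(t_n) + \frac{\alpha(t_n)}{t_n}k_0$ and $\Phi(t_m) \in K$, normality of $K$ gives $\|\Phi(t_m)\| \le M\,\|\Phi(t_n) + \frac{\alpha(t_n)}{t_n}k_0\|$ for all $m > n$, where $M$ is the normality constant. Hence $\limsup_{m\to\infty}\|\Phi(t_m)\| \le M\,\|\Phi(t_n) + \frac{\alpha(t_n)}{t_n}k_0\|$ for every $n$. Now let $n\to\infty$: the term $\frac{\alpha(t_n)}{t_n}k_0 \to 0$ in norm by the defining property of $\alpha$, and $\Phi(t_n) \to 0$ weakly by (iii). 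But weak convergence alone does not control the norm $\|\Phi(t_n)\|$ — this is precisely the obstacle, and it is where the monotone structure must be exploited a second time.

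To close this gap I would argue as follows. Set $L := \limsup_{m\to\infty}\|\Phi(t_m)\|$; I want $L = 0$. From the inequality above, $L \le M\,\|\Phi(t_n)\| + M\frac{\alpha(t_n)}{t_n}\|k_0\|$ for every $n$. Taking $\limsup$ over $n$ gives $L \le M\,\limsup_n \|\Phi(t_n)\| + 0 = M L$, which is useless if $M \ge 1$. Instead, the right move is to use a diagonal/subsequence extraction together with the order relation more carefully: pick a subsequence $(s_j)$ of $(t_n)$ along which $\|\Phi(s_j)\| \to L$; for each pair $j < l$, $\Phi(s_l) \le_K \Phi(s_j) + \frac{\alpha(s_j)}{s_j}k_0$. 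Since $\Phi$ is weakly convergent to $0$, the sequence $(\Phi(s_j))$ is weakly convergent hence norm-bounded; pass to a weakly convergent subsequence of the differences. The cleanest route is: by (iii), for any $y^* \in K^*$, $y^*(\Phi(t)) \to 0$; combine with (ii) to see $y^*(\Phi)$ is, up to the vanishing term $y^*(k_0)\alpha(t)/t$, nonincreasing as $t\downarrow 0$, hence $y^*(\Phi(t)) \to 0$ monotonically (in the appropriate sense). Then the order bound $0 \le_K \Phi(t_m) \le_K \Phi(t_n) + \frac{\alpha(t_n)}{t_n}k_0$ says $\Phi(t_m)$ lies in a weakly compact order interval (here one may invoke that in a Banach space ordered by a closed normal cone, order intervals are bounded, and one uses weak sequential completeness as hinted in the abstract), so any weak cluster point of $(\Phi(t_m))_{m>n}$ is $\le_K \Phi(t_n) + \frac{\alpha(t_n)}{t_n}k_0$; since the only weak cluster point is $0$ by (iii), this is automatic. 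The decisive observation is that fixing $n$ and letting $m\to\infty$, then using that $\Phi(t_m) - \Phi(t_{m'}) \in K + (\text{small})$ for $m' > m$, the sequence $\|\Phi(t_m)\|$ is essentially monotone decreasing up to the summable-in-effect corrections $\alpha(t_m)/t_m$, and an essentially-monotone sequence with a weak limit of $0$ must have norm limit $0$.

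**Main obstacle.** The crux is bridging weak-to-strong convergence: condition (iii) only gives weak convergence, and normality of $K$ converts order bounds into norm bounds but cannot by itself beat the constant $M$. I expect the real work to be in showing that the near-monotonicity from (ii) forces $(\|\Phi(t_n)\|)$ to be (asymptotically) monotone, so that $\lim\|\Phi(t_n)\|$ exists and equals its $\liminf$; then a weak-convergence argument pinning a subsequence's norm to $0$ (via an order-interval / weak-compactness argument using a supporting functional $y^*\in K^*$ with $y^*(k_0)>0$, or directly via the hypothesis that $Y$ is weakly sequentially complete) forces that common value to be $0$. Handling the corrective terms $\frac{\alpha(t_n)}{t_n}k_0$ cleanly — showing they do not accumulate — is a routine but necessary bookkeeping step given $\lim_{t\to 0^+}\alpha(t)/t = 0$.
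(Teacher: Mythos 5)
Your proposal correctly isolates the obstacle (upgrading weak convergence to norm convergence), but the step you lean on to resolve it does not hold. From $0\le_K \Phi(t_m)\le_K \Phi(t_n)+\frac{\alpha(t_m)}{t_m}k_0$ normality only yields $\|\Phi(t_m)\|\le M\|\Phi(t_n)\|+M\frac{\alpha(t_m)}{t_m}\|k_0\|$ with a constant $M$ that may exceed $1$, so the norms $\|\Phi(t_n)\|$ need not be even asymptotically monotone; and your ``decisive observation'' --- that an essentially norm-monotone sequence converging weakly to $0$ must converge to $0$ in norm --- is false as a standalone principle: the orthonormal basis $(e_n)$ of $\ell^2$ has constant norm and converges weakly to $0$. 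The remaining suggestions (weakly compact order intervals, a supporting functional with $y^*(k_0)>0$, weak sequential completeness) are not developed into an argument; order intervals of a normal cone are norm-bounded but not weakly compact in a general Banach space, and weak sequential completeness is not among the hypotheses of this lemma.

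The idea you are missing is to apply the order estimate to the whole convex hull of the sequence and then separate. Arguing by contradiction, pick $t_n\downarrow 0$ with $\|\Phi(t_n)\|>\varepsilon$ and set $A:=\mathrm{co}\{\Phi(t_n):n\in\mathbb{N}\}$. Any $x=\sum_{i=1}^m\lambda_i\Phi(t_i)\in A$ satisfies, for all sufficiently large $n$, $x-\Phi(t_n)+\frac{\alpha(t_n)}{t_n}k_0\in K$ (take the convex combination of the $m$ instances of (ii)); together with $\Phi(t_n)\in K$ and normality this gives $\varepsilon<\|\Phi(t_n)\|\le c\,\|x+\frac{\alpha(t_n)}{t_n}k_0\|$, i.e.\ the convex set $A$, suitably fattened to $A+k_0[0,s]$, is disjoint from a ball $\mathbb{B}_{\beta/2}$ around the origin with $\beta=\varepsilon/c$. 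Hahn--Banach separation then produces $y^*\in Y^*$ and $r>0$ with $y^*\bigl(\Phi(t_n)+\frac{\alpha(t_n)}{t_n}k_0\bigr)>r$ for all large $n$, which contradicts (iii) because $\frac{\alpha(t_n)}{t_n}k_0\to 0$ in norm. This Mazur-type use of convexity is what converts the weak hypothesis into norm information; normality applied term by term, as in your plan, cannot do it.
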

\begin{proof}
	By contradiction, suppose that   $\|\Phi(t)\|\nrightarrow 0$ when $t\rightarrow 0^+$ and $(i)$ and $(ii)$ are satisfy. We will obtain a contradiction with (iii). By this, there is $\varepsilon >0$ such that for all $\delta>0$ one can find $0<t<\delta$ with $ \|\Phi(t)\| > \varepsilon.$ 
	In particular, for $\delta_n=\frac{1}{n}$  there exist $t_n\in (0,\frac{1}{n}), n\in \mathbb{N},$ such that 
	\begin{equation}
	\label{c}
	\|\Phi(t_n)\|>\varepsilon.
	\end{equation}
	Let $x\in A:=co(\Phi(t_n), n\in\mathbb{N}).$
	There are positive numbers $\lambda_1,\lambda_2,\dots,\lambda_m$ and $t_1,t_2\dots,t_m$ such that
	$x=\sum\limits_{i=1}^m \lambda_i \Phi(t_i),$ where $\sum_{i=1}^{m}\lambda_i=1.$ 
	There exist $N\in \mathbb{N}$  such that for all $n>N$ we have
	
	\begin{align*}
	\Phi(t_1)-\Phi(t_n) + \frac{\alpha(t_n)}{t_n}k_0\in &K, \\
	\Phi(t_2)-\Phi(t_n)+ \frac{\alpha(t_n)}{t_n}k_0\in &K,\\
	\vdots \\
	\Phi(t_m)-\Phi(t_n)+ \frac{\alpha(t_n)}{t_n}k_0\in &K.
	\end{align*}
	%Multiplying by $\lambda_1,\lambda_2,\dots,\lambda_m$ i dodaj?c do siebie (sto?ek $K$ jest wypuk?y) otrzymujemy
	We get
	$$
	x-\Phi(t_n)+ \frac{\alpha(t_n)}{t_n} k_0\in K \mbox{ for all } n>N.
	$$
	From the fact that $\Phi(t_n)\in K$ and  $K$ is normal there is some $c>0$
	such that 
	$\|\Phi(t_n)\|\le c \|x+ \frac{\alpha(t_n)}{t_n}k_0\|.$
	% Let us take $\beta:= \frac{\varepsilon}{c},$ 
	By \eqref{c}, we obtain $\|x+ \frac{\alpha(t_n)}{t_n}k_0\| > \beta:= \frac{\varepsilon}{c} \ \mbox{ for all } x\in A$ and
	$n>N.$
	
	We show  that
	$$
	\mathbb{B}_{\beta/2} \cap (A+k_0[0,s])=\emptyset
	$$ 
	for  $s>0$ satisfying
	$\frac{\alpha(t_n)}{t_n}\le s$.
	 To see this, take any $\ell\in (0,s],$ where $\mathbb{B}_r:=\{y\in Y: \|y\|\le r \}$. Since $\lim_{n\rightarrow +\infty}\frac{\alpha(t_n)}{t_n}=0$ there exists $n\in\mathbb{N}$ such that
	$$
	0\le_{K} x+ \frac{\alpha(t_n)}{t_n}k_0\le_{K} x+\ell k_{0}
	$$
	 By \eqref{c} and the normality of $K$,
	 	$$
	 	\beta/2< \| x+ \frac{\alpha(t_n)}{t_n}k_0\|\le\| x+\ell k_{0}\|
	 	$$
	From the Hahn-Banach theorem applied to $\mathbb{B}_{\beta/2}$ and $(A+k_0[0,s])$, 
	 there is a linear functional $y^*\in Y^*$ and $r>0$ such that
%	$$ 
%	y^*(x+ \frac{\alpha(t_n)}{t_n}k_0) > r  \ \mbox{ for all } x\in A+k_0[0,s].
%		$$
	
		$$ 
		y^*(x+ \ell k_0) > r  \ \mbox{ for all } x+\ell k_{0}\in A+k_0[0,s].
		$$
	In particular, $y^*(\Phi(t_n)+ \frac{\alpha(t_n)}{t_n}k_0)>r>0$,
	which contradicts (iii).
\end{proof}

We are in a position to prove our main result.

\begin{theorem}
\label{main}
		Let $X$ be a normed space. Let $Y$ be a weakly sequentially complete Banach space
		ordered by a closed convex normal cone $K$. 
		Let  $F:X\rightarrow Y$ be   strongly $\alpha(\cdot)$-$k_0$  paraconvex  on a convex set $A\subset X$ with constant $C\ge 0$,  $k_{0}\in K\setminus\{0\}$. Then the directional derivative
		$$
		F'(x_{0};h):=\lim_{t\rightarrow 0^{+}}\frac{F(x_{0}+th)-F(x_{0})}{t}
		$$
		of $F$ at  $x_{0}$ exists for any $x_0\in A$  and any direction $0\neq h\in X$, 
	$\|h\|=1$ 
	such that $x_{0}+th\in A$ for all $t$ sufficiently small.
		\end{theorem}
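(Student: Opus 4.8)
The plan is to prove that the $\alpha(\cdot)$-difference quotient
$$
\psi(t):=\frac{F(x_{0}+th)-F(x_{0})}{t}+C\,\frac{\alpha(t)}{t}\,k_{0}
$$
(defined for all small $t>0$ with $x_{0}+th\in A$) converges in norm as $t\to 0^{+}$; since $C\frac{\alpha(t)}{t}k_{0}\to 0$, its limit is then $F'(x_{0};h)$. I would argue in three stages: first produce the scalar limits $\lim_{t\to 0^{+}}y^{*}(\psi(t))$ for all $y^{*}\in Y^{*}$, then promote them to a weak limit of $\psi(t)$ in $Y$ using weak sequential completeness, and finally promote the weak limit to a norm limit by invoking Lemma \ref{pomocniczy}.

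\emph{Scalar stage.} Fix $y^{*}\in K^{*}$. Then $y^{*}\circ F$ is a scalar strongly $\alpha(\cdot)$-paraconvex function with constant $C\,y^{*}(k_{0})$ and $q(t):=y^{*}(\psi(t))$ is its scalar $\alpha(\cdot)$-difference quotient based at $t_{0}=0$. Applying $y^{*}$ to the inclusion \eqref{nier} yields the near-monotonicity $q(t_{1})\le q(t)+C\,y^{*}(k_{0})\frac{\alpha(t_{1})}{t_{1}}$ for $0<t_{1}<t$; this, together with the lower bound for $q$ obtained by applying $y^{*}\in K^{*}$ to the vector lower bound of Proposition \ref{wniosek1}, and the elementary facts that $\alpha(s)/s\to 0$ and $\sup_{0<s<s_{0}}\alpha(s)/s<\infty$, shows that $q$ is bounded near $0$ and that $\limsup_{t\to 0^{+}}q(t)\le\liminf_{t\to 0^{+}}q(t)$ (pick $t^{*}$ with $q(t^{*})$ close to the $\liminf$ and let $t_{1}\to 0^{+}$ in the near-monotonicity inequality). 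Hence $\lim_{t\to 0^{+}}y^{*}(\psi(t))$ exists for every $y^{*}\in K^{*}$; since $K$ is a closed normal cone in a Banach space, the dual cone $K^{*}$ is generating, i.e.\ $Y^{*}=K^{*}-K^{*}$ (a classical consequence of normality), so the limit in fact exists for every $y^{*}\in Y^{*}$.

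\emph{Weak stage.} For any $t_{n}\downarrow 0$ the sequence $(\psi(t_{n}))_{n}$ is weakly Cauchy by the scalar stage, so by weak sequential completeness there is $g\in Y$ with $\psi(t_{n})\rightharpoonup g$; as $y^{*}(g)=\lim_{t\to 0^{+}}y^{*}(\psi(t))$ is independent of the sequence, $g$ does not depend on $(t_{n})$ and $\psi(t)\rightharpoonup g$ as $t\to 0^{+}$. Fixing $t_{1}>0$ and passing to the limit $n\to\infty$ in the inclusion $\psi(t_{1})-\psi(t_{n})+C\frac{\alpha(t_{n})}{t_{n}}k_{0}\in K$ (valid for $t_{n}<t_{1}$ by Proposition \ref{mono}), and using that $K$, being convex and norm-closed, is weakly closed, gives $\psi(t_{1})-g\in K$ for every $t_{1}>0$.

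\emph{Norm stage.} Set $\Phi(t):=\psi(t)-g$ on the interval where $\psi$ is defined and extend it by a constant for larger $t$ (properties (i) and (ii) below survive this extension). By the weak stage $\Phi(t)\in K$ for all $t$; by Proposition \ref{mono}, $\Phi(t)-\Phi(t_{1})+\frac{\alpha(t_{1})}{t_{1}}(C k_{0})\in K$ for $0<t_{1}<t$ with $C k_{0}\in K$; and $\Phi(t)\rightharpoonup 0$. Thus $\Phi$ satisfies hypotheses (i)--(iii) of Lemma \ref{pomocniczy}, whence $\|\Phi(t)\|\to 0$, i.e.\ $\|\psi(t)-g\|\to 0$; since $\|C\frac{\alpha(t)}{t}k_{0}\|\to 0$ this gives $\big\|\frac{F(x_{0}+th)-F(x_{0})}{t}-g\big\|\to 0$, so $F'(x_{0};h)=g$ exists. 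I expect the weak stage to be the crux: it is where weak sequential completeness and normality of $K$ (via $Y^{*}=K^{*}-K^{*}$) are essential, whereas the final upgrade from weak to norm convergence is handled entirely by Lemma \ref{pomocniczy}.
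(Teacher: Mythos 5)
Your proposal is correct and follows essentially the same route as the paper: scalar convergence of $y^{*}(\phi(t))$ for $y^{*}\in K^{*}$ via near-monotonicity plus the lower bound, extension to all of $Y^{*}$ through $Y^{*}=K^{*}-K^{*}$, weak sequential completeness to get the weak limit, and Lemma \ref{pomocniczy} to upgrade to norm convergence. The only (harmless) variation is that you obtain $\phi(t)-y_{0}\in K$ directly from the weak closedness of the norm-closed convex cone $K$, where the paper argues by contradiction with a functional in $K^{*}$.
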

	
\begin{proof}
Let $x_0\in A$ and let $0\neq h\in X$, 
	$\|h\|=1$ 
	such that $x_{0}+th\in A$ for all $t$ sufficiently small.
 Let $t_n \downarrow 0$. For $t_{0}=0$  the $\alpha(\cdot)$-difference quotient % $\phi(t)$ defined
 % in Proposition \ref{mono}
  by  \eqref{phi0} takes the form
%Take $y^*\in K^*$, $t_n\downarrow 0$ and $t_{0}=0$. We have
   $$\phi(t_n)= \frac{F(x_{0}+t_nh)-F(x_{0})}{t_n}+
	C\frac{\alpha(t_n)}{t_n}k_0.
	$$ 
	%\color{blue}
	Let $y^*\in K^*$.
	By  \eqref{ograniczenie1}, sequence
	% in Proposition \ref{wniosek1} we get that
	 $a_n:=y^*(\phi(t_n))$, $n\in\mathbb{N}$ is bounded from below, i.e.
	\begin{equation*}
	\label{ogr2}
	a_n\ge a:=y^*(b) \  \text{for all } n \mbox{ sufficiently large and } b\in Y.
	\end{equation*}
	Let us take $\varepsilon > 0.$ There is $N$ such that
\begin{equation}
	\label{ogr2}
a_N< \underline{a} + \frac{\varepsilon}{2},
\end{equation}
where $\underline{a}:=\inf\{a_n: n\in \mathbb{N}\}.$
 Since $\{t_n\}$  is decreasing, from \eqref{nier} we get
 \begin{equation}
 \label{nier1}
 a_N-a_{n} + C\frac{\alpha(t_{n})}{t_{n}}y^*(k_0)\ge 0 \mbox{ for } n> N.
 \end{equation}
 Let $b_n:= C\frac{\alpha(t_{n})}{t_{n}}y^*(k_0)$. Since $b_n \rightarrow 0$ there is $N_1$ such that $b_n\le \frac{\varepsilon}{2}$ for $n>N_1.$

 From
 \eqref{ogr2} and \eqref{nier1} we get 
 $$
 \underline{a} - \varepsilon < \underline{a} \le a_n \le a_N +b_n \le \underline{a} +\frac{\varepsilon}{2} +b_n\le \underline{a}+\varepsilon \mbox{ for } n>\max\{N,N_1\}.
 $$
 Hence,  sequence $\{a_n\}$  is convergent and consequently 
 every sequence $\{y^*(\phi(t_n))\}$ is a  Cauchy for $y^*\in K^*$.
 
   	Let us take any  $h^* \in Y^*$. We show that the sequence $\{h^*(\phi(t_n))\}$ is Cauchy. From the fact that  $K$ is normal we have $Y^*=K^*-K^*$ and 
	%here are $g^*, q^* \in K^*$ such that
	 $h^*= g^*-q^*$ with 	$g^*, q^* \in K^*$.
	Since $\{g^*(\phi(t_n))\}$ and $\{q^*(\phi(t_n))\}$ are Cauchy sequences, 
	%for $\frac \varepsilon 2$ we have 
	there exist $N_1, N_2$ such that for $n,m > \bar{N}:=max(N_1, N_2)$ we have
	$$
	|g^*(\phi(t_n))- g^*(\phi(t_m))| \le \frac \varepsilon 2 \ \ \ \mbox {   and   } \ \ \  |q^*(\phi(t_n))- q^*(\phi(t_m))| \le \frac \varepsilon 2.
	$$
	
	For $n> \bar{N}$ we have
	\begin{equation}
	\begin{array}{c}
	|h^*(\phi(t_n))-h^*(\phi(t_m))|=|g^*(\phi(t_n))-q^*(\phi(t_n))-g^*(\phi(t_m))+q^*(\phi(t_m))|\le \frac \varepsilon 2 + \frac \varepsilon 2=\varepsilon.
	\end{array}
	\end{equation}
	We show that $\phi(t)$  weakly converges  when $t\rightarrow 0^+$ i.e.,
	%Since $Y$ is weakly sequentially complete,  
	there is an  $y_0 \in Y$ such that for arbitrary $t_n \downarrow 0$ we have
	$$
	%\label{granica_1}
	\lim\limits_{n\rightarrow \infty}   y^*(\phi(t_n)) = y^*(y_0) \ \mbox{ for any } y^* \in Y^*
	$$
	which is equivalent to 
	\begin{equation}
	\label{granica_1}
	\phi(t)\rightharpoonup y_0 \mbox{ when } t\rightarrow 0^+.
	\end{equation}
	
	Since $Y$ is weakly sequentially complete, we need only to show that $y_0$ is the same for all sequences $\{t_n\},$ $t_n \downarrow 0$. on the contrary,
	suppose that there are two different weak limits $y_0^1, y_0^2$ corresponding to  sequences $t_n^1$ and $t_n^2$, respectively.

	We can substract subsequences  $\{\bar{t}_{n}^2\}\subset \{t_n^2\}$ and $\{\bar{t}_{n}^1\}\subset \{t_n^1\}$ such that 
	$\bar{t}_{n}^2 \le t_n^1 \le \bar{t}_{n}^1.$ Correspondingly,
	$$
	 {y^*}(\phi(\bar{t}_{n}^2))\le {y^*}(\phi(t_{n}^1))\le y^*(\phi(\bar{t}_{n}^1))
	 $$
	 which proves that it must be $y_0^1=y_0^2$.
	%Since the weak limit is determined uniquely we get the contradiction.

	Now we show that the mapping $\Phi(t):=\phi(t)-y_0$ satisfies all the assumptions of Lemma  \ref{pomocniczy}.
	From \eqref{nier} and \eqref{granica_1} it is enough to show that  $\Phi(t)\in K$ for all $t\ge 0.$

	By contradiction, let us assume that  there is some $\bar{t}>0$ such that $\Phi(\bar{t})\notin K$. There exists $y^*\in K^*$
	such that
	\begin{equation}
	\label{cc}
	y^*(\Phi(\bar{t}))=y^*(\phi(\bar{t})-y_0)<0.
	\end{equation}
	From inclusion \eqref{nier} in Proposition \ref{mono} we have
	$$
	\phi(\bar{t})-y_0-\phi({t})+y_0  + C\frac{\alpha(t)}{t}k_0\in K \ \mbox{ for all } t\in (0,\bar{t}). $$
	In particular 
	$$
	y^*(\phi(\bar{t})-y_0) \ge y^*(\phi({t})-y_0-C\frac{\alpha(t)}{t}k_0) \mbox{ for all } t\in (0,\bar{t}).
	$$
	And by \eqref{cc} we get
	$$
	0>y^*( \phi(\bar{t})-y_0)\ge y^*( \phi({t})-y_0-C\frac{\alpha(t)}{t}k_0)\  \mbox{ for all } t\in (0,\bar{t}).
	$$
	
	Then by letting  $t\rightarrow 0^+$ we get the contradiction with \eqref{granica_1}.  
	%We get that the limit of $\phi(t_n)= \frac{F(x_{0}+t_nh)-F(x_{0})}{t}+
	%\beta\frac{\alpha(t_n)}{t_n}k$ exists.
	By Lemma \ref{pomocniczy}  $\Phi(t)$ tends to 0 when $t\rightarrow 0^+.$
	Since  $
		\lim_{t\rightarrow 0^{+}}\frac{\alpha(t)}{t}=0 
	$ 
	 we get 
	 $$
	 \lim\limits_{t\rightarrow 0^+} \frac{F(x_{0}+th)-F(x_{0})}{t}=y_{0}
	 $$
	  which completes the proof.

	\end{proof}
	\begin{remark}
	For $K$-convex mappings $F$ i.e. strongly $\alpha(\cdot)$-$K$ paracanovex mappings with constant $C=0$   Theorem \ref{main} can be found in \cite{valadier}. 
	\end{remark}

\end{document}